\newtheorem{theorem}{Theorem}
\newcommand{\be}{\begin{equation}}
\newcommand{\ee}{\end{equation}}
\newcommand{\bea}{\begin{eqnarray}}
\newcommand{\eea}{\end{eqnarray}}
\begin{document}
\title{A lower bound for the scalar curvature of noncompact nonflat Ricci shrinkers}
\author{Bennett Chow}
\address{Department of Mathematics, University of California San Diego, La Jolla, CA 92093}
\email{{benchow@math.ucsd.edu}}
\author{Peng Lu}
\address{Department of Mathematics, University of Oregon, Eugene, OR 97403}
\email{{penglu@uoregon.edu}}
\author{Bo Yang}
\address{Department of Mathematics, University of California San Diego, La Jolla, CA 92093}
\email{{b5yang@math.ucsd.edu}}

\begin{abstract}
We show that recent work of Ni and Wilking \cite{Ni} yields the result that a
noncompact nonflat Ricci shrinker has at most quadratic scalar curvature
decay. The examples of noncompact K\"{a}hler--Ricci shrinkers by Feldman,
Ilmanen, and Knopf \cite{F-I-K} exhibit that this result is sharp.

\end{abstract}
\maketitle


Let $\left(  \mathcal{M}^{n},g,f\right)  $ be a complete shrinking gradient
Ricci soliton (\emph{Ricci shrinker} for short) with $R_{ij}+\nabla_{i}%
\nabla_{j}f-\frac{1}{2}g_{ij}=0$ and $R+|\nabla f|^{2}-f=0$. Bing-Long Chen
\cite{ChenB} proved that $R\geq0$. If $\left(  \mathcal{M},g\right)  $ is not
isometric to Euclidean space, then $R>0$ (see Stefano Pigola, Michele Rimoldi,
and Alberto Setti \cite{P-R-S} and Shijin Zhang \cite{Zhang}).

Recently, Lei Ni and Burkhard Wilking \cite{Ni} proved that on any noncompact
nonflat Ricci shrinker and for any $\epsilon>0$, there exists a constant
$C_{\epsilon}>0$ such that $R(x)\geqslant C_{\epsilon}d(x,O)^{-2-\epsilon}$
wherever $d(x,O)$ is sufficiently large. The purpose of this note is to
observe the following version of their result.

\begin{theorem}
Let $(\mathcal{M}^{n},g,f)$ be a complete noncompact nonflat shrinking
gradient Ricci soliton. Then for any given point $O\in\mathcal{M}$ there
exists a constant $C_{0}>0$ such that $R(x)d(x,O)^{2}\geqslant C_{0}^{-1}$
wherever $d(x,O)\geqslant C_{0}$. Consequently, the asymptotic scalar
curvature ratio of $g$ is positive.
\end{theorem}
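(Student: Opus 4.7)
My plan is to remove the $\epsilon > 0$ loss from the Ni--Wilking bound $R(x) \geq C_\epsilon d(x,O)^{-2-\epsilon}$ by exploiting the shrinker identities. By the Cao--Zhou estimate $f(x) = \tfrac{1}{4}\bigl(d(x,O) + O(1)\bigr)^2$ as $d(x,O)\to\infty$, the desired conclusion is equivalent to $Rf \geq c > 0$ outside a compact set, and $Rf$ is a natural object on a shrinker because of the companion identity $R + |\nabla f|^2 = f$. So I would recast everything in terms of $f$ and prove $\liminf_{f(x)\to\infty} R(x)f(x) > 0$.

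The second step is to use the drifted equation $\Delta R - \langle \nabla f, \nabla R\rangle = R - 2|\mathrm{Ric}|^2$ together with the gradient identity $\nabla R = 2\,\mathrm{Ric}(\nabla f,\cdot\,)$. Feeding the Ni--Wilking estimate into $R + |\nabla f|^2 = f$ shows that $R/f \to 0$ and $|\nabla f|^2 / f \to 1$ at infinity. In this regime the drifted Laplacian applied to $Rf$ has a dominant positive contribution from $\mathrm{Hess}\,f \sim \tfrac12 g$ that I expect to absorb the bad $|\mathrm{Ric}|^2$ term, and a strong minimum principle on the sublevel sets $\{f \leq L\}$, with boundary data controlled by Ni--Wilking as $L \to \infty$, should then yield $Rf \geq c > 0$.

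The main obstacle is precisely the term $-2|\mathrm{Ric}|^2$ in the drifted equation: $|\mathrm{Ric}|$ is not pointwise dominated by a multiple of $R$, so the minimum principle cannot be applied to $R$ directly. Ni and Wilking handle this by coupling $R$ with a power such as $f^{1+\epsilon/2}$, gaining a strict sign from the positive exponent $\epsilon/2$; the observation underlying this paper is presumably that, using the exact asymptotics $|\nabla f|^2/f \to 1$ afforded by their own estimate as a first round, the exponent can be pushed down to $1$ and the bound extracted without loss. Once $Rf \geq c$ is in hand, the Cao--Zhou estimate converts this to $R(x) d(x,O)^2 \geq 4c + o(1)$, whence $\limsup_{d(x,O) \to \infty} R(x) d(x,O)^2 \geq 4c > 0$, which is the stated positivity of the asymptotic scalar curvature ratio.
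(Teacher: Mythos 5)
Your high-level architecture---recast the problem in terms of $f$ via the Cao--Zhou estimate and run a minimum principle on the drifted equation for $R$---is the same as the paper's, but three of your specific steps do not hold up. First, ``feeding the Ni--Wilking estimate into $R+|\nabla f|^2=f$ shows that $R/f\to 0$'' is a non sequitur: their estimate is a \emph{lower} bound on $R$ and yields no upper bound, so you cannot conclude $R/f\to 0$ or $|\nabla f|^2/f\to 1$ (and the paper needs neither). Second, you identify $-2|\operatorname{Rc}|^2$ as the main obstacle, but for a \emph{lower} bound on $R$ that term has a favorable sign and is simply discarded: all one uses is $\Delta_f R=R-2|\operatorname{Rc}|^2\le R$. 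The genuine difficulty is elsewhere, namely in building a barrier $u$ with $\Delta_f u\ge u$ plus a strictly positive surplus for $f$ large. The naive choice $u=cf^{-1}$ fails because $\Delta_f(f^{-1})=f^{-1}-f^{-2}\bigl(\tfrac n2-2|\nabla f|^2/f\bigr)$ carries the wrong-signed term $-\tfrac n2 f^{-2}$ and there is no pointwise lower bound on $|\nabla f|^2$ to compensate. The paper's key device is the second-order correction $u=cf^{-1}+cnf^{-2}$: since $\Delta_f(f^{-2})=2f^{-2}-\cdots$, the coefficient $2$ produces a spare $+cnf^{-2}$ that absorbs the bad term once $f\ge 2n$. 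Your ``dominant positive contribution from $\mathrm{Hess}\,f\sim\tfrac12 g$ that I expect to absorb the bad term'' is precisely the step that is missing, and it is aimed at the wrong term.

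Third, your exhaustion by sublevel sets $\{f\le L\}$ with ``boundary data controlled by Ni--Wilking as $L\to\infty$'' cannot close: on $\{f=L\}$ their bound gives only $Rf\gtrsim L^{-\epsilon/2}\to 0$, so the boundary control degenerates in exactly the limit you need, and you recover nothing better than what you started with. The paper avoids this entirely by working with $\phi=R-cf^{-1}-cnf^{-2}$, which tends to $0$ at infinity because $R\ge 0$ and $f\to\infty$; hence a negative infimum of $\phi$, if any, is attained at an interior point, where the barrier inequality forces $\tfrac{f}{2}-n\le 0$, contradicting $f\ge\tfrac{9n^2}{4}$ outside $B(O,C_1+3n)$. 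The constant $c$ is fixed by the strict positivity of $R$ on a fixed compact set (Pigola--Rimoldi--Setti, Zhang), not by any boundary data at infinity, and Ni--Wilking's estimate is never used as an input. As written, your proposal does not contain a proof.
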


\begin{proof}
Recall that Huai-Dong Cao and De-Tang Zhou \cite{Cao-Zhou} proved that there
exists a positive constant $C_{1}$ such that $f$ satisfies the estimate:%
\begin{equation}
\frac{1}{4}\left[  \left(  d(x,O)-C_{1}\right)  _{+}\right]  ^{2}\leq
f(x)\leq\frac{1}{4}\left(  d(x,O)+2f{(O)}^{\frac{1}{2}}\right)  ^{2}%
,\label{Sharp lower bound for f}%
\end{equation}
where $c_{+}\doteqdot\max(c,0)$ (see also Fu-Quan Fang, Jian-Wen Man, and
Zhen-Lei Zhang \cite{FangManZHang} and, for an improvement, Robert Haslhofer
and Reto M\"{u}ller \cite{H-M}). Define the $f$-Laplacian $\Delta_{f}%
\doteqdot\Delta-\nabla f\cdot\nabla$. We have $0<R+\left\vert \nabla
f\right\vert ^{2}=f=\frac{n}{2}-\Delta_{f}f$. Recall that (see \cite{ELM} for
example)%
\begin{equation}
\Delta_{f}R=-2\left\vert \operatorname{Rc}\right\vert ^{2}+R.\label{test3}%
\end{equation}

Note that%
\begin{align}
\Delta_{f}\left(  f^{-1}\right)   &  =f^{-1}-f^{-2}\left(  \frac{n}{2}%
-2\frac{\left\vert \nabla f\right\vert ^{2}}{f}\right)  ,\label{test1}\\
\Delta_{f}\left(  f^{-2}\right)   &  =2f^{-2}-f^{-3}\left(  n-6\frac
{\left\vert \nabla f\right\vert ^{2}}{f}\right)  .\label{test2}%
\end{align}
Using (\ref{test3}) and (\ref{test1}), we compute for any $c>0$%
\begin{equation}
\Delta_{f}\left(  R-cf^{-1}\right)  \leqslant R-cf^{-1}+cf^{-2}\left(
\frac{n}{2}-2\frac{\left\vert \nabla f\right\vert ^{2}}{f}\right)  .
\end{equation}
Define $\phi\doteqdot R-cf^{-1}-cnf^{-2}$. By (\ref{test2}) we obtain%
\begin{equation}
\Delta_{f}\phi\leqslant\phi-cnf^{-3}\left(  \frac{f}{2}-n\right)
-cf^{-4}\left(  2f+6n\right)  \left\vert \nabla f\right\vert ^{2}%
.\label{maximum principle1}%
\end{equation}

Choosing $c>0$ sufficiently small, we have $\phi>0$ inside $B(O,C_{1}+3n)$,
where $C_{1}$ is as in (\ref{Sharp lower bound for f}). If $\inf
_{\mathcal{M}-B(O,C_{1}+3n)}\phi\doteqdot-\delta<0$, then by
(\ref{Sharp lower bound for f}) there exists $\rho>C_{1}+3n$ such that
$\phi>-\frac{\delta}{2}$ in $\mathcal{M}-B\left(  O,\rho\right)  $. Thus a
negative minimum of $\phi$ is attained at some point $x_{0}$ outside of
$B(O,C_{1}+3n)$. By the maximum principle, evaluating
(\ref{maximum principle1}) at $x_{0}$ yields $\frac{f\left(  x_{0}\right)
}{2}-n\leq0$. However, (\ref{Sharp lower bound for f}) implies that
$f(x_{0})\geqslant\frac{9n^{2}}{4}$, a contradiction. We conclude that $R\geq
cf^{-1}+cnf^{-2}$ on $\mathcal{M}$. The theorem follows from
(\ref{Sharp lower bound for f}).
\end{proof}

\textbf{Remark}. Mikhail Feldman, Tom Ilmanen, and Dan Knopf
\cite{F-I-K} constructed complete noncompact K\"{a}hler--Ricci
shrinkers on the total spaces of $k$-th powers of tautological line
bundles over the complex projective space
$\mathbb{C}\mathbb{P}^{n-1}$ for $0<k<n$. These examples, which have
Euclidean volume growth and quadratic scalar curvature decay, show
that Theorem 1 is sharp.\smallskip

\textbf{Acknowledgment.} We would like to thank Lei Ni for informing us of his
result with Burkhard Wilking.

\end{document}